%
%
%

\documentclass[graybox]{svmult}


\usepackage{type1cm}        
%
\usepackage{makeidx}         
\usepackage{graphicx}        
\usepackage{multicol}        
\usepackage[bottom]{footmisc}

\usepackage{newtxtext}       %
\usepackage{newtxmath}       

\usepackage[boxed]{algorithm2e}








\hyphenpenalty=500

\makeindex             


\begin{document}

\title*{Logic-based Benders Decomposition 
	\\ for Large-scale Optimization
}
\author{J. N. Hooker}
\institute{J. N. Hooker \at Carnegie Mellon University \email{jh38@andrew.cmu.edu}
}
%
\maketitle


\abstract{
	Logic-based Benders decomposition (LBBD) is a substantial generalization of classical Benders decomposition that, in principle, allows the subproblem to be any optimization problem rather than specifically a linear or nonlinear programming problem.  It is amenable to a wide variety large-scale problems that decouple or otherwise simplify when certain decision variables are fixed.  This chapter presents the basic theory of LBBD and explains how classical Benders decomposition is a special case.  It also describes branch and check, a variant of LBBD that solves the master problem only once.  It illustrates in detail how Benders cuts and subproblem relaxations can be developed for some planning and scheduling problems.  It then describes the role of LBBD in three large-scale case studies.  The chapter concludes with an extensive survey of the LBBD literature, organized by problem domain, to allow the reader to explore how Benders cuts have been developed for a wide range of applications.  
}

\section{Introduction}

The fundamental challenge of large-scale optimization is that its difficulty tends to increase superlinearly, even exponentially, with the size of the problem.  The challenge can often be overcome by solving the problem with a heuristic method, but only if one is willing to sacrifice optimality, or at least a proof of optimality.  If a provably optimal solution is desired, decomposition may be the only practical recourse.

The advantage of decomposition is that it breaks a problem into smaller subproblems that are easier to solve.  Due to superlinear complexity growth, solving many small subproblems can require much less computational effort than solving one large problem.  The disadvantage of decomposition is that to achieve optimality, the subproblems must somehow communicate with each other, and it may be necessary to solve them repeatedly to converge to a solution.  Nonetheless, when a problem has suitable structure, an algorithm based on decomposition can transform an intractable problem into a tractable one.

One of the best known and most successful decomposition strategies is {\em Benders decomposition}, which dates from the early 1960s \cite{Ben62}.  It was originally designed for problems that become linear programming (LP) problems, known as {\em subproblems}, when certain variables are fixed.  The duals of the subproblems are solved to obtain {\em Benders cuts}, which are constraints written in terms of the variables that were fixed.  A {\em master problem} is then solved, subject to Benders cuts generated so far, to find another set of values for these variables, whereupon the procedure repeats.  The Benders cuts exclude undesirable solutions, and the algorithm converges to a provably optimal solution under weak conditions.  The Benders approach is most attractive when the subproblem is not only linear but decouples into smaller subproblems that can be solved independently.  


Although classical Benders decomposition has many successful applications, its basic strategy is substantially restricted by the fact that the subproblem must be an LP problem---or a continuous nonlinear programming (NLP) problem in a 1972 extension to ``generalized'' Benders decomposition \cite{Geoffrion72}.  There are a wide range of potential applications in which the subproblem simplifies without yielding an LP or NLP problem, often by decoupling into smaller problems.  Classical Benders decomposition cannot exploit this kind of problem structure.

{\em Logic-based Benders decomposition} (LBBD), introduced in \cite{Hoo00,HooOtt03}, addresses this issue by recognizing that the classical Benders method is a actually a special case of a much more general method.  LBBD extends the underlying Benders strategy to cases in which the subproblem is an arbitrary optimization problem.  It obtains Benders cuts by solving an {\em inference dual} of the subproblem, which reduces to the LP dual when the subproblem is linear.   

Due due its greater versatility, LBBD has a large and rapidly growing range of successful applications.  In many cases it leads to computational speedups of several orders of magnitude over the previous state of the art.  It introduces a complication, however, that is not present in classical Benders methods.  Logic-based Benders cuts must be developed anew for each problem class, while classical Benders cuts are automatically given by the LP dual of the subproblem.  This can be viewed as a drawback, but it can also be an advantage.  It may allow one to exploit the special structure of a given problem class with specially crafted Benders cuts, resulting in a effective solution method.  

{\em Branch and check}, also introduced in \cite{Hoo00}, is a variation of LBBD.  Rather than generate Benders cuts after each master problem is solved, it solves the master problem only once, by a branching method.  When a feasible solution is found in the course of branching, the resulting subproblem is solved to obtain a Benders cut that is enforced throughout the remainder of the branching search.  This method was first compared computationally with standard LBBD in \cite{Tho01}, which introduced the term ``branch and check.'' A related approach was later proposed specifically for mixed integer/linear programming (MILP) in \cite{CodFis06}, where the cuts are called {\em combinatorial Benders cuts}.

An important advantage of LBBD is that it provides a natural means to combine different kinds of problem formulations and solvers.  For example, the master problem might be amenable to an MILP formulation and solver, while the subproblem might be better suited for constraint programming (CP).  The MILP/CP combination is probably the most popular, because problems frequently decompose into an assignment problem suitable for MILP and a scheduling problem on which CP methods tend to excel \cite{BapLepNui01,Hooker07a}.  

There is an outmoded perception that Benders decomposition converges slowly and is therefore often unsuitable even for problems that have a natural decomposition.  We first remark that this perception is based on experience with classical Benders methods, not LBBD.  Even classical Benders methods have been substantially accelerated over the last two decades, using a number of devices.   An excellent survey of these improvements can be found in \cite{RahCraGen17}, which covers both classical methods and LBBD.  These authors also document an explosion of literature on Benders methods since 2000 or so, no doubt due to improvements in performance.

We begin below with an exposition of the theory behind LBBD and a precise statement of the LBBD algorithm, followed by an explanation of how classical Benders decomposition is a special case.  We then discuss branch and check, a variant of LBBD in which the master problem is solved only once, and when it is likely to be preferable to standard LBBD.  Following this is a detailed presentation of how LBBD applies to a job assignment and scheduling problem with various objective functions.  This discussion illustrates how to formulate logic-based Benders cuts for a class of problems that have perhaps benefited most frequently from LBBD to date.  It also shows how to create subproblem relaxations for this class of problems, since such relaxations are often essential to the success of LBBD.  We then briefly describe three case studies in which LBBD performed successfully in the context of large-scale optimization.  Finally, since logic-based Benders cuts are problem-specific, it can be helpful to examine previous LBBD applications in a similar problem domain, to learn how others have exploited problem structure.  Fortunately, a wide variety of LBBD applications now appear in the literature, and the chapter concludes with an extensive survey of these.

\section{Fundamentals of LBBD}

We begin by defining the {\em inference dual}, which is a basic element of LBBD.  Consider a general optimization problem $\min \{f(x) \;|\; C(x), \; x\in D\}$, 
in which $C(x)$ represents a constraint set containing variables in $x=(x_1,\ldots, x_n)$, and $D$ is the domain of $x$ (such as tuples of nonnegative reals or integers).  
The inference dual is the problem of finding the tightest lower bound $v$ on the objective function that can be deduced from the constraints, or
\begin{equation}
\max \Big\{ v \;\Big|\; C(x) \stackrel{P}{\vdash} \big(f(x) \geq v\big),  \; v\in \mathbb{R}, \; P\in {\cal P}\Big\}
\label{eq:lbbd2}
\end{equation}
Here $C(x) \stackrel{P}{\vdash} (f(x) \geq v)$ indicates that proof $P$ deduces $f(x)\geq v$ from $C(x)$.  The domain of variable $P$ is a family $\mathcal{P}$ of proofs, and a solution of the dual is a proof of the tightest bound $v$.  Thus the inference dual is always defined with respect to an inference method that determines the family of proofs in $\mathcal{P}$.   For a feasibility problem with no objective function, the dual can be viewed as the problem finding a proof $P$ of infeasibility.

In practical applications of LBBD, the dual is defined with respect to the inference method used to prove optimality (or infeasibility) when solving the subproblem.  We therefore assume that the inference dual is a strong dual: its optimal value is equal to the optimal value of the original problem.\footnote{An infeasible problem is viewed as having optimal value $\infty$ (when minimizing) or $-\infty$ (when maximizing).}
When the subproblem is an LP problem, the inference method is nonnegative linear combination of inequalities, and the inference dual becomes the LP dual, as we will see in the next section.


We now define LBBD, which is applied to a problem of the form 
\begin{equation}
\min \big\{ f(x,y) \;\big|\; C(x,y),\; C'(x), \; x\in D_x, \; y\in D_y \big\} \label{eq:prob}
\end{equation}
Fixing $x$ to $\bar{x}$ 
defines the {\em subproblem} 
\begin{equation}
\min \big\{ f(\bar{x},y) \;\big|\; C(\bar{x},y),\; y\in D_y \big\}  \label{eq:sub}
\end{equation}
The inference dual of the subproblem is
\begin{equation}
\max \Big\{ v \;\Big|\; C(\bar{x},y) \stackrel{P}{\vdash} \big(f(\bar{x},y) \geq v\big),  \; v\in \mathbb{R}, \; P\in {\cal P}\Big\}  \label{eq:dual}
\end{equation}
Let $v^*$ be the optimal value of the subproblem ($\infty$ if the subproblem is infeasible), and let proof $P^*$ solve the inference dual by deducing the bound $f(\bar{x},y)\geq v^*$.  The essence of LBBD is that {\em this same proof} may deduce useful bounds when $x$ is fixed to values other than $\bar{x}$.  The term ``logic-based'' refers to this pivotal role of logical deduction.  A {\em Benders cut} $z\geq B_{\bar{x}}(x)$ is derived by identifying a bound $B_{\bar{x}}(x)$ that proof $P^*$ deduces for a given $x$.  Thus, in particular, $B_{\bar{x}}(\bar{x})=v^*$.  The cut is added to the {\em master problem}, which in iteration $k$ of the Benders procedure is
\begin{equation}
z_k = \min \big\{ z \;\big|\; C'(x);\; z \geq B_{x^i}(x),\; i=1, \ldots, k; \; x\in D_x \big\} \label{eq:master}
\end{equation}
where $x^1, \ldots, x^k$ are the solutions of the master problems in iterations $1, \ldots, k$, respectively.  

In any iteration $k$, the optimal value $z_k$ of the master problem is a lower bound on the optimal value of (\ref{eq:prob}), 
and the optimal value $v^*=v_k$ of the subproblem is an upper bound.  The master problem values $z_k$ increase monotonically as the iterations progress, while the subproblem values $v_k$ can move up or down.  The Benders algorithm terminates when the optimal value of the master problem equals the optimal value of the subproblem in some previous iteration.  More precisely, it terminates when $z_k=\min\{v_i\;|\;i=1, \ldots, k\}$, or when $z_k=\infty$ (indicating an infeasible problem).  

At any point in the procedure, the Benders cuts in the master problem partially describe the projection of 
the feasible set of (\ref{eq:prob}) onto $x$.  Even when the procedure is terminated early, it yields a lower bound $z_k$ and upper bound $\min_i\{v_i\}$ on the optimal value, as well as the best feasible solution found so far.  

A formal statement of the LBBD procedure appears as Algorithm~\ref{alg:LBBDa}.  Since $z$ is unconstrained in the initial master problem $\min\{z\;|\;C'(x), \; x\in D_x\}$, we have $z_0=-\infty$, and any feasible $x$ can be selected as the solution $x^0$.  Alternatively, we can use a ``warm start'' by generating a few Benders cuts in advance for heuristically chosen values of $\bar{x}$.

\begin{algorithm}
	$k\gets 0$,$\;$ $v_0\gets -\infty$,$\;$ $v_{\min}\gets \infty$\;
	\Repeat{$z_k = v_{\min}$} {
		\If{the master problem (\ref{eq:master}) is infeasible} {
			{\bf stop}; the original problem (\ref{eq:prob}) is infeasible\;
		}
		let $x^k$ be an optimal solution of the master problem (\ref{eq:master}) with optimal value $z_k$\;
		solve the subproblem (\ref{eq:sub}) with $\bar{x}=x^k$\;
		\If{the subproblem (\ref{eq:sub}) is unbounded} {
			{\bf stop}; the original problem (\ref{eq:prob}) is unbounded\;
		}
		$k\gets k+1$\;
		let $v_k$ be the optimal value of the subproblem (\ref{eq:sub}), where $v_k=\infty$ if (\ref{eq:sub}) is infeasible\;
		generate a Benders cut $z\geq B_{x^k}(x)$ such that $B_{x^k}(x^k)=v_k$\;
		\If{$v_k<\infty$}{
			let $y^k$ be an optimal solution of the subproblem (\ref{eq:sub})\;
			\If{$v_k<v_{\min}$} {
				$v_{\min}\gets v_k$, $\;$ $y^{\mathrm{best}}\gets y^k$\;
			}
		}
	}
	an optimal solution of the original problem (\ref{eq:prob}) is $(x,y)=(x^k,y^{\mathrm{best}})$\;
	\caption{LBBD procedure when the subproblem is an optimization problem.} \label{alg:LBBDa}
\end{algorithm}

If the subproblem is a feasibility problem with no objective function, the procedure continues until a feasible solution of the subproblem is found.  In this case, the original problem (\ref{eq:prob}) has the form
\begin{equation}
\min \big\{ f(x) \;\big|\; C(x,y),\; C'(x), \; x\in D_x, \; y\in D_y \big\} \label{eq:probFeas}
\end{equation}
and the subproblem is a constraint set
\begin{equation}
\{ C(\bar{x},y),\; y\in D_y \big\}  \label{eq:subFeas}
\end{equation}
An infeasible subproblem gives rise to a {\em feasibility cut}, which is a constraint $N_{x^k}(x)$ that is violated when $x=\bar{x}$.  The feasibility cut is added to the master problem
\begin{equation}
z_k = \min \big\{ f(x) \;\big|\; C'(x);\; N_{x^i}(x),\; i=1, \ldots, k; \; x\in D_x \big\} \label{eq:masterFeas}
\end{equation}
This version of the Benders algorithm yields no feasible solution until it terminates, but it still provides a valid lower bound $z_k$ on the optimal value in any iteration $k$.  A statement of the procedure appears as Algorithm~\ref{alg:LBBDb}.

\begin{algorithm}
	$k\gets 0$,$\;$ $\mathrm{feasible}\gets \mathrm{false}$\;
	\Repeat{$\mathrm{feasible}=\mathrm{true}$} {
		\If{master problem (\ref{eq:master}) is infeasible} {
			{\bf stop}; original problem (\ref{eq:prob}) is infeasible\;
		}
		let $x^k$ be an optimal solution of (\ref{eq:master}) with value $z_k$\;
		solve subproblem (\ref{eq:subFeas}) with $\bar{x}=x^k$\;
		$k\gets k+1$\;
		\eIf{(\ref{eq:subFeas}) is infeasible}{
			generate a feasibility cut $N_{x^k}(x)$ that is violated when $x=x^k$\;
		} {
			let $\mathrm{feasible}\gets\mathrm{true}$, and let $y^k$ be a feasible solution of (\ref{eq:subFeas})\;
		} 	
	}
	an optimal solution of the original problem (\ref{eq:prob}) is $(x,y)=(x^k,y^k)$\;
	\caption{LBBD procedure when the subproblem is a feasibility problem.} \label{alg:LBBDb}
\end{algorithm}
The simplest sufficient condition for finite convergence of LBBD is that the master problem variables have finite domains.  This is normally the case in practice, since continuous variables (if any) typically occur in the subproblem. The following is shown in \cite{HooOtt03}.

\begin{theorem}
	If the domains of the master problem variables are finite, Algorithm~\ref{alg:LBBDa} and Algorithm~\ref{alg:LBBDb} terminate after a finite number of steps. 
\end{theorem}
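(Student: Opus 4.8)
The plan is to bound the number of iterations by the cardinality of $D_x$, via the observation that no master-problem solution can be repeated before the algorithm stops. Suppose, for contradiction, that one of the two algorithms runs for infinitely many iterations, producing master solutions $x^1, x^2, \dots \in D_x$. Since $D_x$ is finite, the pigeonhole principle gives indices $j < k$ with $x^k = x^j$. The heart of the argument is that the cut generated from $x^j$ in iteration $j$ — a Benders cut in Algorithm~\ref{alg:LBBDa}, a feasibility cut in Algorithm~\ref{alg:LBBDb} — belongs to the master problem in every later iteration, and in particular in iteration $k$, where $x^k = x^j$ must satisfy it.

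For Algorithm~\ref{alg:LBBDb} this is essentially immediate. If the procedure ever reaches an iteration $k > j$, then the subproblem solved at $\bar x = x^j$ in iteration $j$ was infeasible (otherwise a feasible subproblem solution would have been found and the loop would have stopped at iteration $j$), so a feasibility cut $N_{x^j}(x)$ was generated that is violated at $x = x^j$. But $x^k$ is feasible for the master problem in iteration $k$, which contains this cut, so $x^k$ satisfies $N_{x^j}(x)$ — contradicting $x^k = x^j$. Hence the $x^k$ are pairwise distinct and there are at most $|D_x|$ iterations.

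For Algorithm~\ref{alg:LBBDa} I would first dispose of the case in which the subproblem at $\bar x = x^j$ is infeasible: then $v_j = \infty$ and the cut $z \ge B_{x^j}(x)$ has $B_{x^j}(x^j) = \infty$, so no finite $z$ is admissible at $x = x^j$ and $x^j$ is excluded from every subsequent master problem, again contradicting $x^k = x^j$. If instead the subproblem at $x^j$ is feasible, then $v_j < \infty$ and $(x^j, y^j)$ is a feasible solution of~(\ref{eq:prob}), so the optimal value of~(\ref{eq:prob}) is at most $v_j$. The cut $z \ge B_{x^j}(x)$, being present in the master problem of iteration $k$, forces $z_k \ge B_{x^j}(x^k) = B_{x^j}(x^j) = v_j$. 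On the other hand, as established in the discussion preceding the algorithms, $z_k$ is a lower bound on the optimal value of~(\ref{eq:prob}), and the best subproblem value $v_{\min}$ found through iteration $k$ is an upper bound on it; moreover $v_{\min} \le v_j$. Combining these, $v_j \le z_k \le v_{\min} \le v_j$, so $z_k = v_{\min}$ and the termination test is met in iteration $k$ — contradicting the assumption that the algorithm never stops. Thus no collision $x^k = x^j$ can precede termination, and finiteness of $D_x$ again caps the number of iterations.

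The step I expect to demand the most care is this last squeezing argument: one must check that the relevant cut is genuinely a constraint of the iteration-$k$ master problem (which follows from the monotone accumulation of cuts), that the cut attains value exactly $v_j$ at $x^j$ (guaranteed by the cut-generation requirement $B_{x^k}(x^k) = v_k$), and that the stated bounding properties of $z_k$ and $v_{\min}$ hold, so that the chain of inequalities actually pins $z_k$ to $v_{\min}$. The infeasible-subproblem and feasibility-cut situations are comparatively routine, reducing to the fact that the generated cut simply removes the point $x^j$ from consideration.
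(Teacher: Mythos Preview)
Your argument is correct and is the standard one. Note, however, that the paper does not actually prove this theorem in the text; it simply states the result and attributes the proof to \cite{HooOtt03}. So there is no in-paper proof to compare against. Your pigeonhole-plus-cut argument---no master solution can recur before termination, because the cut generated at its first appearance either excludes it outright (infeasible subproblem, or feasibility cut in Algorithm~\ref{alg:LBBDb}) or forces the squeeze $v_j \le z_k \le v_{\min} \le v_j$ that triggers the stopping test---is exactly the expected line of reasoning and would match what appears in the cited reference. The only cosmetic point worth tightening is the handling of the infeasible-subproblem case in Algorithm~\ref{alg:LBBDa}: rather than speaking of ``$z \ge \infty$,'' it is cleaner to observe that if $B_{x^j}(x^j)=\infty$ then either the master at iteration $k$ is infeasible (and the algorithm stops) or its optimum is attained at some $x^k \ne x^j$, which already gives the contradiction.
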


\section{Classical Benders Decomposition}

LBBD reduces to the classical Benders method when the subproblem is an LP problem and the inference dual is based on nonnegative linear combination and domination.  To see this, we first show that the inference dual is the classical LP dual.  Consider an LP problem 
\begin{equation}
\min\{cx \;|\; Ax\geq b, \; x\geq 0\} \label{eq:LPprob0}
\end{equation}
We suppose that an inequality $cx\geq v$ is deduced from $Ax\geq b$ when some nonnegative linear combination (surrogate) $uAx\geq ub$ of the constraint set dominates $cx\geq v$, where domination means that $uA\leq c$ and $ub\geq v$.  The inference dual maximizes $v$ subject to the condition that $cx\geq v$ can be deduced from $Ax\geq b$ and can therefore be written
\[
\max\big\{ v \;\big| \; uA\leq c, \; ub\geq v, \; u\geq 0\big\}
\]
This is equivalent to the classical LP dual $\max\{ub\;|\;uA\leq c, \; u\geq 0\}$.  If (\ref{eq:LPprob0}) 
has a finite optimal value $v^*$ and $\bar{u}$ is an optimal dual solution, we have $v^*=\bar{u}b$ by classical duality theory.  The tuple $\bar{u}$ of dual multipliers therefore encodes a proof of optimality by deducing the bound $cx\geq \bar{u}b$.

Classical Benders decomposition is applied to a problem of the form 
\begin{equation}
\min\{f(x)+cy\;|\;g(x)+Ay\geq b, \; x\in D_x, \; y\geq 0\}  \label{eq:LPprob}
\end{equation}
The subproblem is the LP problem
\[
\min\big\{f(\bar{x})+cy\;\big|\;Ay\geq b-g(\bar{x}), \; y\geq 0\big\}
\]
If the subproblem has a finite optimal value $v^*$ and $\bar{u}$ is an optimal dual solution, classical duality theory implies that $v^*=h(\bar{x}) + \bar{u}(b-g(\bar{x}))$, and the tuple $\bar{u}$ of dual multipliers therefore encodes a proof of optimality.  The essence of classical Benders decomposition is that this same tuple of multipliers (i.e., this same proof) yields a lower bound $h(x) + \bar{u}(b-g(x))$ on the optimal value of (\ref{eq:LPprob}) for any $x$.  We therefore have a Benders cut $z\geq h(x)+\bar{u}(b-g(x))$.  If the subproblem is infeasible and its dual is feasible (and therefore unbounded), the Benders cut is $\bar{u}(b-g(x))\leq 0$, where $\bar{u}$ is an extreme ray solution of the subproblem dual.

\section{Branch and Check}

{\em Branch and check} is a variation of LBBD that solves the master problem only once.  It is most naturally applied when a branching procedure solves the master problem, and the subproblem is a feasibility problem.  When a feasible solution of the master problem is encountered during the branching process, it is ``checked'' by solving the subproblem that results.  If the subproblem is infeasible, a feasibility cut is added to the master problem and enforced during the remainder of the tree search.  The algorithm terminates when the search is exhaustive.  

Branch and check can be an attractive alternative when the master problem is significantly harder to solve than the subproblem.  Under the right conditions, it can bring orders-of-magnitude speedups relative to standard LBBD.  A computational comparison of the two methods is provided in \cite{Bec10}.

Branch and check is applied to a problem in the form (\ref{eq:probFeas}).  In all applications to date, the initial master problem $\min\{f(x) \;|\; C'(x), \; x\in D_x\}$ is a mixed integer/linear programming (MILP) problem that is solved by a branch-and-cut method.  When a feasible solution $\bar{x}$ is discovered at a node of the branching tree, perhaps because $\bar{x}$ is an integral solution of the current LP relaxation, the corresponding subproblem (\ref{eq:subFeas}) is solved.  If (\ref{eq:subFeas}) is infeasible, a  cut $N_{\bar{x}}(x)$ is derived.  Since the master problem is an MILP problem, the cut must take the form of a linear inequality.  

If the subproblem is feasible, the tree search continues in the normal fashion.  If a feasibility cut is generated, the current solution $\bar{x}$ is no longer feasible because it violates the cut.  The current LP relaxation is re-solved after adding the feasibility cut, and the search again continues in the usual fashion.  The stopping condition is the same as for normal branch and cut.  At termination, the incumbent solution (if any) is optimal for (\ref{eq:probFeas}) because it defines a feasible subproblem.  

Branch and check is not a special case of branch and cut, because its feasibility cuts are obtained in a different fashion.  Unlike the cuts used in branch-and-cut methods, they are not are valid for the MILP problem being solved.  They are based on a subproblem constraint set that does not appear in the MILP problem.  They intermingle with standard cuts during the tree search but have different origins.

MILP solvers typically use a primal heuristic to generate feasible solutions at the root node of the search tree, and perhaps at other nodes.  These feasible solutions can be used to obtain additional feasibility cuts, sometimes to great advantage.  Another practical consideration is that branch and check requires modification of the code that solves the MILP master problem.  This contrasts with standard LBBD, which can use an off-the-shelf method.  Branch and check can therefore take longer to implement.  

In an extended form of branch and check, there is no separate subproblem, but partial solutions found during the branching process are checked for feasibility.  At any given node of the branching tree, the variables have been fixed so far can be treated as the master problem variables.  Their values are checked for feasibility by solving the subproblem that remains after they are fixed.  If infeasibility is verified, the dual solution of the subproblem can form the basis of a Benders cut.  Such a method can be regarded as a branch-and-check algorithm with a dynamic partition of the master problem and subproblem variables.  

Interestingly, this is the most popular scheme used in state-of-the-art satisfiability (SAT) solvers, where it is known as conflict-directed clause learning \cite{BeaKauSab03}.  Partial solutions are not necessarily obtained by straightforward branching, but their feasibility is nonetheless checked by solving a subproblem in the form of an implication graph.  If infeasibility is detected, a dual solution is derived by identifying a unit resolution proof of infeasibility represented by a conflict graph within the implication graph.  A conflict clause (Benders cut) is obtained from a certain kind of partition of the conflict graph.  Modern SAT solvers can handle industrial instances with well over a million variables.  Their extraordinary efficiency is due in large part to clause learning, which is basically a form of branch and check.

\section{Example: Job Assignment and Scheduling}

An initial example will illustrate several practical lessons for applying LBBD:
\begin{itemize}
	
	\item The master problem and subproblem are often best solved by different methods that are suited to the structure of the two problems.
	
	\item Often the subproblem solver does not provide easy access to its proof of optimality (or infeasibility), and Benders cuts must be based on dual information that is obtained indirectly.  
	
	\item It is usually important to include a relaxation of the subproblem in the master problem, expressed in terms of master problem variables.
	
\end{itemize}

The example problem is as follows \cite{Hooker07}.  Jobs $1,\ldots,n$ must be assigned to facilities $1,\ldots, m$, and the jobs assigned to each facility must be scheduled.  Each job $j$ has processing time $p_{ij}$ on facility $i$, release time $r_j$, and due date $d_j$.  The facilities allow {\em cumulative scheduling}, meaning that jobs can run in parallel so long as the total rate of resource consumption does not exceed capacity.  Job $j$ consumes resources at the rate $c_j$, and facility $i$ has a resource capacity of $C_i$.  If $c_j=C_i=1$ for all $j$ and $i$, we have a {\em disjunctive scheduling} problem in which jobs run one at a time without overlap.  Various objectives are possible, such as minimizing makespan, processing cost, the number of late jobs, or total tardiness.

The problem decomposes naturally.  If the master problem assigns jobs to processors, and the subproblem schedules jobs, the subproblem decouples into a separate scheduling problem for each facility.  Given that the scheduling component of the problem is the most difficult to scale up, this is a substantial benefit because it breaks up the scheduling problem into smaller pieces.  Such a decomposition also allows appropriate solution methods to be applied to the master problem and subproblem.  MILP tends to be very effective for assignment-type problems, while constraint programming (CP) is often the method of choice for scheduling problems.  

Since the master problem is to be solved as a MILP problem, we formulate it with 0--1 variables and linear inequality/equality constraints. Let $x_{ij}$ take the value 1 when job $j$ is assigned to facility $i$.  If we choose to minimize makespan $M$ (the time at which the last job finishes), the master problem (\ref{eq:prob}) is
\begin{equation}
\min \Big\{ M \;\Big|\;M\geq M_i, \;\mbox{all $i$}; \; \sum_i x_{ij} = 1, \;\mbox{all $j$}; \;  \mbox{Benders cuts}; \; x_{ij}\in\{0,1\},\;\mbox{all $i,j$} \Big\}
\label{eq:sched1}
\end{equation}
The variable $M_i$ is the makespan on facility $i$ and will appear in the cuts.  Let $\bar{x}_{ij}$ be the solution of the master problem, and $J_i=\{i\;|\; \bar{x}_{ij}=1\}$ the set of jobs assigned to processor $i$.  If variable $s_j$ is the start time of job $j$, the subproblem for each facility $i$ can be given the CP formulation
\begin{equation}
\begin{array}{l}
\min \Big\{M_i \;\Big|\; 
M_i\geq s_j+p_{ij},\; r_j \leq s_j \leq d_j-p_{ij}, \;\mbox{all $j\in J_i$}; \\
\hspace{25ex} \mathrm{cumulative}\big(s(J_i),p_i(J_i),c(J_i),C_i\big)\Big\}
\end{array}
\label{eq:sched2}
\end{equation}
where $s(J_i)$ is the tuple of variables $s_j$ for $j\in J_i$, and similarly for $p_i(J_i)$ and $c(J_i)$.  The {\em cumulative constraint},
a standard global constraint in CP, requires that the jobs running at any one time have resource consumption rates that sum to at most $C_i$.

Benders cuts can be obtained as follows.  Let $M_i^*$ be the optimal makespan obtained for facility $i$.  We wish to obtain a Benders cut $M_i\geq B_{i\bar{x}}(x)$ for each facility $i$ that bounds the makespan for any assignment $x$, where $B_{i\bar{x}}(\bar{x})=M_i^*$.  Ideally, we would examine the proof of the optimal value $M_i^*$ obtained by the CP solver and determine what kind of bound this same proof deduces when a different set of jobs is assigned to facility $i$. 
However, the solver typically does not provide access to a proof of optimality.

We must therefore rely on information about the proof obtained indirectly.  The most basic information is which job assignments appear as premises in the proof.  If we could find a smaller set $J'_i\subset J_i$ that contains the jobs whose assignments serve as premises, we could write a cut
\begin{equation}
M_i\geq M_i^*\Big(1-\sum_{j\in J'_i} (1-x_j)\Big) \label{eq:makespanNogood}
\end{equation}
that imposes the bound $M_i^*$ whenever the jobs in $J'_i$ are all assigned to facility $i$.  One way to obtain $J'_i$ is to tease out the structure of the proof by removing jobs from $J_i$ one at a time and re-solving the scheduling problem until the minimum makespan drops below $M^*_i$.  The last set of jobs for which the makespan is $M^*_i$ becomes $J'_i$.  This simple procedure can be quite effective, because in many applications the individual scheduling problems can be re-solved very rapidly.  We will refer to cuts like (\ref{eq:makespanNogood}) as {\em strengthened nogood cuts}, a term that originates with analogous feasiblity cuts.
A cut of the form (\ref{eq:makespanNogood}) should be generated and added to the master problem for each facility $i$.  

A weakness of strengthened nogood cuts is that they provide no useful bound when not all jobs in $J'_i$ are assigned to facility $i$.  This weakness can often be overcome by using {\em analytical Benders cuts} that are based on an analysis of the subproblem structure.  For example, if all the release times are the same, we can prove a lemma that gives rise to more useful Benders cuts.  We give the proof from \cite{Hooker07} to illustrate the type of reasoning that is often employed in the derivation of Benders cuts.

\begin{lemma} \label{le:makespan}
	Suppose all release times $r_j=0$, and $M^*_i$ is the optimal makespan on facility $i$ when the jobs in $J'_i$ are assigned to it.  If the jobs in $S\subseteq J'_i$ are removed from facility $i$, the optimal makespan $M_i$ of the resulting problem satisfies
\begin{equation}
M_i \geq M^*_i - \max_{j\in S} \{d_j\} + \min_{j\in S} \{d_j\} - \sum_{j\in S} p_{ij}
\label{eq:sched10}
\end{equation}	
\end{lemma}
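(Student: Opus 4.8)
The plan is to prove the rearranged form of (\ref{eq:sched10}), namely
\[
M^*_i \;\le\; M_i \;+\; \Bigl(\max_{j\in S}\{d_j\}-\min_{j\in S}\{d_j\}\Bigr) \;+\; \sum_{j\in S}p_{ij};
\]
throughout I take $M^*_i<\infty$ (the subproblem on $J'_i$ is feasible, as the statement presupposes), and I abbreviate $d_{\max}=\max_{j\in S}\{d_j\}$, $d_{\min}=\min_{j\in S}\{d_j\}$, $P=\sum_{j\in S}p_{ij}$. Since $M^*_i$ is the minimum makespan over all feasible schedules of $J'_i$, it suffices to exhibit a single feasible schedule of $J'_i$ whose makespan is at most this right-hand side; the reverse inequality $M_i\le M^*_i$, which is not needed here, follows trivially by deleting the jobs of $S$ from an optimal schedule of $J'_i$.

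First I would assemble two pieces. (a) Fix an optimal schedule $\sigma$ of the reduced problem on $J'_i\setminus S$, with makespan $M_i$; since every release time is $0$, each retained job $j$ runs inside $[0,\min\{d_j,M_i\}]$ in $\sigma$ and the cumulative constraint holds at every instant. (b) Because $S\subseteq J'_i$ and the subproblem on $J'_i$ is feasible, restricting any feasible schedule of $J'_i$ to $S$ yields a feasible schedule of the jobs of $S$ alone; in it each job of $S$ finishes by its due date -- hence within the window $[0,d_{\max}]$ -- and the cumulative constraint is respected, so in particular $c_j\le C_i$ for every $j\in S$.

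The core of the argument is to merge (a) and (b) into one feasible schedule of $J'_i$ while paying only $(d_{\max}-d_{\min})+P$ in extra makespan. The idea is to keep $\sigma$ largely intact and reinsert the jobs of $S$ using the $S$-schedule from (b): the jobs of $S$ with small due dates must finish by $d_{\min}$ while those with large due dates may be placed as late as $d_{\max}$, so the inserted material can be confined to a window whose length is essentially the due-date spread $d_{\max}-d_{\min}$, and the retained jobs that actually conflict with it are displaced by no more than their own processing time plus the $S$-processing $P$. One then checks (i) that the cumulative capacity $C_i$ is respected at every instant -- the untouched parts of $\sigma$, the reinserted $S$-schedule, and the displaced retained jobs are arranged so that their rates never superpose beyond $C_i$ -- and (ii) that every due date, of both the reinserted jobs of $S$ and the displaced retained jobs, is still met; adding up the pieces then yields a makespan of at most $M_i+(d_{\max}-d_{\min})+P$.

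The step I expect to be the genuine obstacle is exactly this insertion-and-displacement analysis: verifying (i) and (ii) while proving that the makespan grows by at most $(d_{\max}-d_{\min})+P$ and not something larger. The difficulty is that a retained job may have had essentially no slack in $\sigma$, so it cannot simply be pushed back; the insertion must be organized so that such jobs are untouched and the jobs of $S$ are fitted into the capacity that remains, or into a window dictated by their own due dates. It is likely that $\sigma$ should first be replaced by an equally good optimal reduced schedule in a convenient normal form (for instance with the late retained jobs scheduled as early as possible, or with no retained job straddling time $d_{\min}$) and that the jobs of $S$ should be scheduled as late as their due dates allow, so that the window in which the two schedules interact has length controlled by $d_{\max}-d_{\min}$; making this bookkeeping precise is where the work of the proof lies.
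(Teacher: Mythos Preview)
Your merge-and-displace plan is substantially more involved than what the paper does, and the part you flag as the obstacle is a real one: in cumulative scheduling there is no clean way to insert the jobs of $S$ into $\sigma$ while bounding the total displacement by $(d_{\max}-d_{\min})+P$, because a retained job with a tight deadline cannot be pushed at all, and your ``window of length $d_{\max}-d_{\min}$'' intuition does not actually confine the $S$-jobs (they must live in $[0,d_{\max}]$, not in a band of width $d_{\max}-d_{\min}$).  Your accounting of how the pieces add up to the claimed bound is never made precise, and I do not see a way to carry it through along these lines.

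The paper's argument bypasses the merge entirely with a much simpler construction and a two-case split.  Starting from the optimal reduced schedule with makespan $M_i$, simply append the jobs of $S$ \emph{one after another}, consecutively and contiguously, beginning at time $M_i$; since each job runs alone in its slot, the cumulative constraint is trivially satisfied and the makespan becomes $M_i+P$.  Case~(a): if $M_i+P\le d_{\min}$, then every appended job finishes by $d_{\min}\le d_j$ and hence meets its deadline, so this is a feasible schedule for all of $J'_i$, giving $M^*_i\le M_i+P$, which already implies (\ref{eq:sched10}).  Case~(b): if $M_i+P>d_{\min}$, add $d_{\max}$ to both sides and rearrange to obtain $M_i+P+(d_{\max}-d_{\min})>d_{\max}\ge M^*_i$, where the last step uses only that $M^*_i$ comes from a feasible schedule.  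Note that in case~(b) no feasible schedule of $J'_i$ is constructed at all: the failure of the naive append already forces the arithmetic inequality once combined with a trivial feasibility bound on $M^*_i$.  This case split is the idea you are missing, and it replaces your insertion-and-displacement analysis completely.
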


\begin{proof}
Starting with the optimal schedule that yields makespan $M_i$, we can create a schedule for $J'_i$ with makespan $M_i+\sum_{j\in S}p_{ij}$ by scheduling the jobs in $S$ consecutively and contiguously, beginning at time $M_i$.  We consider two cases:
\[
\mbox{(a)} \; M_i+\sum_{j\in S}p_{ij}\leq \min_{j\in S} \{d_j\}, \;\;\;\mbox{(b)} \; M_i+\sum_{j\in S}p_{ij} > \min_{j\in S} \{d_j\}
\]
In case (a), the schedule is feasible, and we have $M^*_i\leq M_i+\sum_{j\in S}p_{ij}$ because $M^*_i$ is optimal.  But this implies (\ref{eq:sched10}).  In case (b), we add $\max_{j\in S}\{d_j\}$ to both sides of (b) and rearrange terms to obtain
\[
M_i+\sum_{j\in S}p_{ij}+\max_{j\in S}\{d_j\} - \min_{j\in S}\{d_j\} > \max_{j\in S} \{d_j\} \geq M^*
\]
where the second inequality is due to the fact that $M^*_i$ results from a feasible solution.  This again implies (\ref{eq:sched10}). 
\end{proof}

To obtain an analytic Benders cut from Lemma~\ref{le:makespan}, we interpret $S$ as the set of jobs in $J'_i$ that are no longer assigned to facility $i$ in subsequent Benders iterations; that is, the jobs $j$ for which $x_{ij}=0$.  Thus (\ref{eq:sched10}) implies the cut 
\begin{equation}
M_i \geq M^*_i - \sum_{j\in J'_i} p_{ij}(1-x_{ij}) + \max_{j\in J'_i} \{d_j\} - \min_{j\in J'_i} \{d_j\} \label{eq:makespanAnalytic}
\end{equation}
because $\max_{j\in J_i}\{d_j\}\geq \max_{j\in S}\{d_j\}$ and $\min_{j\in J_i}\{d_j\}\leq \max_{j\in S}\{d_j\}$.  A cut of this form is generated for each facility $i$ and added to the master problem.  These cuts should be used alongside the strengthened nogood cuts (\ref{eq:makespanNogood}), which impose a tighter bound $M^*_i$ when no jobs are removed from facility $i$ and the deadlines differ.

A similar line of argument establishes analogous cuts when the jobs have different release times but no deadlines, an assumption perhaps better suited to minimum makespan problems:
\begin{equation}
M_i \geq M^*_i - \sum_{j\in J'_i} p_{ij}(1-x_{ij}) - \max_{j\in J'_i} \{r_j\} + \min_{j\in J'_i} \{r_j\} \label{eq:makespanAnalytic2}
\end{equation} 
These cuts should also be used alongside the strengthened nogood cuts (\ref{eq:makespanNogood}).

The Benders cuts are similar for other objective functions.  If the objective is to minimize assignment cost, we let $c_{ij}$ be the cost of assigning job $j$ to facility $i$.  The master problem becomes 
\[
\min \Big\{ \sum_j c_{ij}x_{ij}\;\Big|\;\sum_i x_{ij} = 1, \;\mbox{all $j$}; \;  \mbox{Benders cuts}; \; x_{ij}\in\{0,1\},\;\mbox{all $i,j$} \Big\}
\]
and the subproblem for facility $i$ is the feasibility problem
\[
\Big\{r_j \leq s_j \leq d_j-p_{ij}, \;\mbox{all $j\in J_i$};\;\mathrm{cumulative}\big(s(J_i),p_i(J_i),c(J_i),C_i\big)\Big\}
\]
Strengthened nogood cuts take the form $\sum_{j\in J'_i} (1-x_j)\geq 1$ and are derived in a similar fashion as the makespan cuts.

If the objective is to minimize total tardiness, the deadlines become due dates, and the master problem is
\[
\min \Big\{ \sum_i T_i \;\Big|\;\sum_i x_{ij} = 1, \;\mbox{all $j$}; \;  \mbox{Benders cuts}; \; x_{ij}\in\{0,1\},\;\mbox{all $i,j$} \Big\}
\]
where the variable $T_i$ is the tardiness on facility $i$ and will appear in the Benders cuts.  The subproblem for facility $i$ is 
\[
\min \Big\{\sum_{j\in J_i} (s_j+p_{ij}-d_j)^+ \;\Big|\; 
r_j \leq s_j, \;\mbox{all $j\in J_i$};\;
\mathrm{cumulative}\big(s(J_i),p_i(J_i),c(J_i),C_i\big)\Big\}
\]
where $(\alpha)^+=\max\{0,\alpha\}$.  There are various schemes for deriving strengthened nogood cuts.  One that has been used successfully \cite{Hooker07} goes as follows.  Let $T^*_i(J)$ be the minimum tardiness on facility $i$ when the jobs in $J$ are assigned to it, so that $T^*_i(J_i)$ is the minimum tardiness under the current assignment $J_i$.  Let $Z_i$ be the set of jobs in $J_i$ that can be removed one at a time, with all other jobs remaining, without reducing the minimum tardiness.  Thus $Z_i=\{j\in J_i\;|\; T^*_i(J_i\setminus\{j\})=T^*_i(J_i)\}$.  Then we have the cut 
\[
T_i \geq T^*_i(J_i\setminus Z_i)  \Big(1 - \hspace{-1ex} \sum_{j\in J_i\setminus Z_i} \hspace{-1ex}(1-x_{ij}) \Big), \;\; T_i\geq 0
\]
This cut should be used alongside the cut
\[
T_i \geq T^*_i(J_i)  \Big(1 - \hspace{0ex} \sum_{j\in J_i} \hspace{0ex}(1-x_{ij}) \Big), \;\; T_i\geq 0
\]
to obtain a tighter bound $T^*(J_i)$ when no jobs are removed from facility $i$.

Analytical Benders cuts for the minimum tardiness problem are analogous to those for the minimum makespan problem, although the derivation is somewhat more involved.  
They have the form
\[
T_i \geq M^*_i - \Big( \sum_{j\in J_i} p_{ij}(1-x_{ij}) + 
\max_{j\in J_i} \{d_j\} - \min_{j\in J_i} \{d_j\} \Big)
\]
where $M^*_i$ is the minimum makespan on facility $i$ for assignment $J_i$, a quantity that must be computed separately from the minimum tardiness.  For reasons explained in Section 6.15.5 of \cite{Hooker12}, the analytical cuts are weak for cumulative scheduling but are more effective for the special case of disjunctive scheduling.

\section{Relaxing the Subproblem}

Past experience with LBBD has shown that success often depends on the presence of a subproblem relaxation in the master problem.  It is not the typical sort of relaxation, because it is expressed in terms of the master problem variables rather than the subproblem variables.  Nonetheless, a suitable relaxation is often evident based on the structure of the subproblem.  This is illustrated here for the job assignment and scheduling problem of the previous section, using various objective functions \cite{Hooker07}.

When the objective is to minimize assignment cost, a simple {\em time window relaxation} can be very effective.  Let the {\em energy} consumed by job $j$ be $p_jc_j$.  Then it is clear that the total energy consumed by jobs that run in a given time interval $[t_1,t_2]$ can be no greater than the energy $C_i(t_2-t_1)$ that is available during that interval.  This gives rise to a simple valid inequality for facility $i$:
\[
\sum_{j\in J(t_1,t_2)}\hspace{-2ex}  p_{ij}c_{ij}x_{ij} \leq C_i(t_2-t_1)
\]
where $J(t_1,t_2)$ is the set of jobs with time windows in the interval $[t_1,t_2]$.  We will refer to this inequality as $R_i[t_1,t_2]$.  We can add a relaxation of the subproblem to the master problem by augmenting the master problem with the inequalities $R_i[r_j,d_{j'}]$ for each $i$ and each distinct pair $[r_j,d_{j'}]$ of release times and deadlines.  Actually, we can omit many of these inequalities because they are dominated by others.  Let the {\em tightness} of an inequality $R_i(t_1,t_2)$ be 
\[
\theta_i(t_1,t_2)= (1/C_i) \hspace{-1.5ex} \sum_{j\in J(t_1,t_2)} \hspace{-1.7ex} p_{ij}c_{ij} - t_2 + t_1
\]
Then the following lemma can be used to eliminate redundant inequalities:
\begin{lemma}
	Inequality $R_i[t_1,t_2]$ dominates $R_i[u_1,u_2]$ whenever $[t_1,t_2]\subseteq [u_1,u_2]$ and $\theta_i(t_1,t_2) \geq \theta_i(u_1,u_2)$.
\end{lemma}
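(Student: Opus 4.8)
The plan is to read ``dominates'' in the sense natural for this master problem: since the master variables satisfy $x_{ij}\in\{0,1\}$, in particular $0\le x_{ij}\le 1$, so it suffices to show that every vector $x$ with $0\le x_{ij}\le 1$ satisfying $R_i[t_1,t_2]$ also satisfies $R_i[u_1,u_2]$. The argument is a short bookkeeping computation built on the inclusion of index sets and a rearrangement of the hypothesis on tightness.

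First I would note that because $[t_1,t_2]\subseteq[u_1,u_2]$ (equivalently $u_1\le t_1\le t_2\le u_2$), any job $j$ whose time window $[r_j,d_j]$ lies in $[t_1,t_2]$ also has its window in $[u_1,u_2]$; hence $J(t_1,t_2)\subseteq J(u_1,u_2)$. Write $\Delta=J(u_1,u_2)\setminus J(t_1,t_2)$ for the set of ``extra'' jobs. Then split the left-hand side of $R_i[u_1,u_2]$ as $\sum_{j\in J(u_1,u_2)}p_{ij}c_{ij}x_{ij}=\sum_{j\in J(t_1,t_2)}p_{ij}c_{ij}x_{ij}+\sum_{j\in\Delta}p_{ij}c_{ij}x_{ij}$, bound the first term by $C_i(t_2-t_1)$ using $R_i[t_1,t_2]$, and bound the second term by $\sum_{j\in\Delta}p_{ij}c_{ij}$ using $x_{ij}\le 1$ together with $p_{ij},c_{ij}\ge 0$.

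Next I would translate the hypothesis $\theta_i(t_1,t_2)\ge\theta_i(u_1,u_2)$. Since the energy sums over $J(u_1,u_2)$ and over $J(t_1,t_2)$ differ by exactly $\sum_{j\in\Delta}p_{ij}c_{ij}$, substituting the definition of $\theta_i$ and clearing the factor $1/C_i$ rearranges this inequality to $\sum_{j\in\Delta}p_{ij}c_{ij}\le C_i\big[(u_2-u_1)-(t_2-t_1)\big]$. Combining with the bound from the previous step gives $\sum_{j\in J(u_1,u_2)}p_{ij}c_{ij}x_{ij}\le C_i(t_2-t_1)+C_i\big[(u_2-u_1)-(t_2-t_1)\big]=C_i(u_2-u_1)$, which is precisely $R_i[u_1,u_2]$.

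There is no real obstacle here; the only points requiring a modicum of care are (i) making explicit that domination is meant relative to the variable upper bounds $x_{ij}\le 1$ already present in the master problem, and (ii) correctly identifying that the two energy sums differ by $\sum_{j\in\Delta}p_{ij}c_{ij}$ so that the $\theta_i$ inequality delivers exactly the slack needed. Nonnegativity of the processing times $p_{ij}$ and consumption rates $c_{ij}$ is used implicitly when the $x_{ij}$ factors are dropped from the sum over $\Delta$.
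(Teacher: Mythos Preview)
Your argument is correct. The paper states this lemma without proof, so there is nothing to compare against; your bookkeeping computation is the natural way to establish it, and each step checks out: the inclusion $J(t_1,t_2)\subseteq J(u_1,u_2)$ follows from $[t_1,t_2]\subseteq[u_1,u_2]$, the rearrangement of the tightness hypothesis to $\sum_{j\in\Delta}p_{ij}c_{ij}\le C_i\big[(u_2-u_1)-(t_2-t_1)\big]$ is exact (using $C_i>0$), and the final chain of inequalities delivers $R_i[u_1,u_2]$. Your caveat that domination must be read relative to the bounds $0\le x_{ij}\le 1$ is well placed, since without $x_{ij}\le 1$ the step bounding $\sum_{j\in\Delta}p_{ij}c_{ij}x_{ij}$ by $\sum_{j\in\Delta}p_{ij}c_{ij}$ would fail.
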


In a minimum makespan problem, we can use similar reasoning to bound the makespan.  Let $R_i(t)$ be the inequality
\[
M_i \geq t + (1/C_i) \hspace{-1.5ex} \sum_{j\in J(t,\infty)} \hspace{-1.5ex} p_{ij}c_{ij}x_{ij}
\]
We can add inequalities $R_i(r_j)$ to the master problem for each distinct release time $r_j$.  Again, some of these may be redundant.

The minimum tardiness problem calls for less obvious relaxation schemes.  Two have been derived, the simpler of which is a time-window relaxation based on the following.
\begin{lemma}
	If jobs $1,\ldots,n$ are scheduled on a single facility $i$, the total tardiness is bounded below by
	\[
	\Big( (1/C_i)\hspace{-2ex} \sum_{j\in J(0,d_k)} \hspace{-2ex} p_{ij}c_{ij} - d_k \Big)^+
	\]
	for each $k=1,\ldots,n$.
\end{lemma}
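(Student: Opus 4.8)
The plan is to bound the total tardiness from below by looking only at the jobs in $J(0,d_k)$ --- those whose entire time window lies in $[0,d_k]$, which, since release times are nonnegative, are exactly the jobs with due date $d_j\le d_k$ --- and applying a one-line energy argument to this subset. Fix any feasible schedule on facility $i$ and write $C_j=s_j+p_{ij}$ for the completion time of job $j$. If $J(0,d_k)=\emptyset$ the claimed bound is $(-d_k)^+=0$ and there is nothing to prove, so assume $J(0,d_k)\neq\emptyset$ and let $L=\max\{C_j\;|\;j\in J(0,d_k)\}$.

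The first substantive step is the energy estimate. Every job $j\in J(0,d_k)$ satisfies $r_j\ge 0$ and $C_j\le L$, so all of its processing --- hence all of its energy $p_{ij}c_{ij}$ --- is carried out within the interval $[0,L]$. Since the cumulative constraint forces the total resource-consumption rate on facility $i$ to stay at or below $C_i$ at every instant, the total energy processed during $[0,L]$ is at most $C_i L$. In particular $\sum_{j\in J(0,d_k)} p_{ij}c_{ij}\le C_i L$, so $L\ge (1/C_i)\sum_{j\in J(0,d_k)} p_{ij}c_{ij}$.

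The second step converts this into a tardiness bound. Choose $j^\ast\in J(0,d_k)$ with $C_{j^\ast}=L$. The total tardiness $\sum_j (C_j-d_j)^+$ is a sum of nonnegative terms, hence at least the single term $(C_{j^\ast}-d_{j^\ast})^+$; and since $d_{j^\ast}\le d_k$, monotonicity of $(\cdot)^+$ gives $(C_{j^\ast}-d_{j^\ast})^+\ge (C_{j^\ast}-d_k)^+=(L-d_k)^+$. Applying monotonicity of $(\cdot)^+$ once more to the energy estimate yields $\sum_j (C_j-d_j)^+\ge\big((1/C_i)\sum_{j\in J(0,d_k)} p_{ij}c_{ij}-d_k\big)^+$, which is the assertion.

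I do not expect a real obstacle here; the argument is short. The two points that need care are (i) that release times are nonnegative, so that no processing occurs before time $0$ and the window $[0,L]$ genuinely captures all of the energy of the jobs in $J(0,d_k)$; and (ii) using the comparison $(C_j-d_j)^+\ge(C_j-d_k)^+$, valid on $J(0,d_k)$ precisely because $d_j\le d_k$ there, in the correct direction. One could retain the full sum $\sum_{j\in J(0,d_k)}(C_j-d_k)^+$ rather than just the term for $j^\ast$, but nothing is gained, since that larger quantity is still only bounded below by $(L-d_k)^+$ through the same energy estimate.
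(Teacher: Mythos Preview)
Your argument is correct. The paper actually states this lemma without proof (it is imported from \cite{Hooker07} as a building block for the subproblem relaxation), so there is nothing to compare against; your energy-plus-single-tardiness-term argument is exactly the intended one-line justification. The two careful points you flag --- that $r_j\ge 0$ for $j\in J(0,d_k)$ so all energy falls in $[0,L]$, and that $d_{j^\ast}\le d_k$ gives the inequality in the right direction --- are precisely the places where the proof could go wrong, and you handle both cleanly.
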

This yields the following valid inequalities for each facility $i$:
\[
T_i \geq (1/C_i) \hspace{-2ex} \sum_{j\in J(0,d_k)} \hspace{-2ex} p_{ij}c_{ij}x_{ij} - d_k,\; k=1,\ldots,n
\]
These inequalities can be added to the master problem, along with $T_i\geq 0$, for each $i$.  To state the second relaxation, let $\pi_i$ be a permutation that orders jobs by increasing energy on facility $i$, so that $p_{i\pi_i(1)}c_{i\pi_i(1)}\leq \cdots\leq p_{i\pi_i(n)}c_{i\pi_i(n)}$.  We have
\begin{lemma}
	If jobs $1,\ldots,n$ are scheduled on a single facility $i$ and are indexed so that $d_1\leq\cdots\leq d_n$, the total tardiness is bounded below by $\sum_{k=1}^n \hat{T}_k$, where
	\[
	\hat{T}_k = \Big( (1/C_i) \sum_{j=1}^k p_{i\pi_i(j)} c_{i\pi_i(j)} - d_k \Big)^+, \;\; k=1, \ldots, n
	\]
\end{lemma}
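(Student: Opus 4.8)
The plan is to prove the bound by combining an elementary ``energy'' (area) estimate on completion times with a rearrangement inequality for the hinge function $(\alpha)^+=\max\{0,\alpha\}$. Consider an arbitrary feasible schedule on facility $i$, with all start times nonnegative, and let $\sigma$ be a permutation of the jobs that orders them by completion time, so that $\tau_1\le\cdots\le\tau_n$, where $\tau_k:=s_{\sigma(k)}+p_{i\sigma(k)}$ and $s_j$ is the start time of job $j$. Put $E_k:=(1/C_i)\sum_{l=1}^{k} p_{i\pi_i(l)}c_{i\pi_i(l)}$, so that $\hat T_k=(E_k-d_k)^+$, and observe that the total tardiness equals $\sum_{k=1}^{n}(\tau_k-d_{\sigma(k)})^+$.

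First I would show that $\tau_k\ge E_k$ for every $k$. Since the completion times are sorted, each of the jobs $\sigma(1),\dots,\sigma(k)$ has finished by time $\tau_k$, so all of their processing takes place within the interval $[0,\tau_k]$; hence the energy they consume, $\sum_{l=1}^{k} p_{i\sigma(l)}c_{i\sigma(l)}$, is at most the energy $C_i\tau_k$ available on facility $i$ over that interval, because the cumulative constraint keeps the instantaneous consumption rate at most $C_i$. Since $\sum_{l=1}^{k} p_{i\pi_i(l)}c_{i\pi_i(l)}$ is the sum of the $k$ smallest job energies, it is a lower bound on $\sum_{l=1}^{k} p_{i\sigma(l)}c_{i\sigma(l)}$, and dividing through by $C_i$ gives $\tau_k\ge E_k$.

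Next I would compare the pairing of completion times with due dates that is induced by the schedule to the ``sorted'' pairing. The sequence $(\tau_1,\dots,\tau_n)$ is increasing, whereas $(d_{\sigma(1)},\dots,d_{\sigma(n)})$ is merely a permutation of $d_1\le\cdots\le d_n$. The key fact is that, for reals $a_1\le a_2$ and $b_1\le b_2$, one has $(a_1-b_1)^+ + (a_2-b_2)^+\le (a_1-b_2)^+ + (a_2-b_1)^+$; applying this as an exchange argument to adjacent out-of-order pairs of due dates shows that $\sum_{k}(\tau_k-d_{\sigma(k)})^+\ge\sum_{k}(\tau_k-d_k)^+$. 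Finally, since $t\mapsto(t-d_k)^+$ is nondecreasing, the energy bound yields $(\tau_k-d_k)^+\ge(E_k-d_k)^+=\hat T_k$, and summing over $k$ gives total tardiness $\ge\sum_{k=1}^{n}\hat T_k$, as claimed.

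The only step needing real care is the exchange argument. The two-element inequality is cleanest to verify via the identity $(\alpha)^+=\frac{1}{2}(\alpha+|\alpha|)$, which reduces it to $|a_1-b_1|+|a_2-b_2|\le|a_1-b_2|+|a_2-b_1|$ for $a_1\le a_2$ and $b_1\le b_2$ --- the standard fact that a monotone matching minimizes $\ell_1$ cost on the line; one then has to check that finitely many adjacent transpositions turn $(d_{\sigma(1)},\dots,d_{\sigma(n)})$ into the sorted tuple without ever increasing the sum. Everything else --- the area bound and the monotonicity of $(\cdot)^+$ --- is routine. It is also worth stating explicitly the standing assumption that no job starts before time $0$, since the energy estimate is taken over the interval $[0,\tau_k]$.
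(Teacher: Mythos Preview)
The paper states this lemma without proof (the derivation is deferred to the cited source), so there is no in-paper argument to compare against. Your proof is correct and is the natural one: an energy (area) lower bound gives $\tau_k\ge E_k$ for the $k$th smallest completion time, the rearrangement/exchange argument shows that the monotone matching of completion times to due dates minimizes $\sum_k(\tau_k-d_{\sigma(k)})^+$, and monotonicity of $(\cdot)^+$ finishes it. Your caveat about nonnegative start times is appropriate; in the paper's setting this is implicit since $s_j\ge r_j\ge 0$.
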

This leads to a relaxation consisting of the inequality $T_i\geq \sum_{k=1}^n \hat{T}_{ik}$ for each $i$, as well as the inequalities $\hat{T}_{ik}\geq 0$ and
\[
\hat{T}_{ik} \geq (1/C_i) \sum_{j=1}^k p_{i\pi_i(j)} c_{i\pi_i(j)} x_{i\pi_i(j)} - d_k - U_{ik}(1-x_{ik})
\]
for each $i$ and $k=1,\ldots,n$.  Here $U_{ik}$ is a big-$M$ term that can be defined
\[
U_{ik} = \sum_{j=1}^k p_{i\pi_i(j)} c_{i\pi_i(j)}  - d_k
\]
The cuts are valid even when $U_{ik}<0$.

\section{Large-Scale Case Studies}

In this section we briefly highlight three case studies that illustrate how LBBD can succeed in large-scale settings.  One is a massive optimization problem associated with an incentive auction conducted by the U.S. Federal Communications Commission (FCC).  The team that designed the solution procedure received the prestigious Franz Edelman Award from INFORMS (Institute for Operations Research and the Management Sciences) in 2018.  A second case study illustrates how LBBD can scale up by using approximate solutions of the master problem and subproblem.  A third shows how LBBD can be of value even when the problem does not naturally decompose.  The reader is referred to the original papers for details regarding the models and solution methods.
\bigskip

\noindent
{\em Frequency Spectrum Allocation}
\medskip

\noindent
The FCC incentive auction was designed to reallocate parts of the frequency spectrum to television broadcasters and wireless providers, due to growing demand from the latter.  Wireless providers offered bids to TV stations for additional bandwidth.  After the auction was conducted, an optimization problem was solved to reallocate the spectrum \cite{Hof18}.  The smaller TV band that remained was reallocated to stations so as to minimize interference, and successful wireless bidders were assigned frequencies in an enlarged wireless band.  The problem was formulated for nearly 3000 U.S. and Canadian stations and initially contained some 2.7 million pairwise interference restrictions, as well as many additional constraints.  Stations in congested areas were allocated portions of the wireless band when necessary to reduce interference.  

The overall solution algorithm was an LBBD procedure in which the master problem allocated frequencies to wireless providers and certain stations in the wireless band, and the subproblem attempted to find a feasible packing of the TV band for the remaining stations.  The problem was solved to optimality. 
\bigskip

\noindent
{\em Suboptimal Solution of Master Problem and/or Subproblem}
\medskip

\noindent
The performance of LBBD can often be accelerated by solving the master problem, or even the subproblem, only approximately.  Suboptimal solution of the master problem is a well-known and often used strategy, because only feasible solutions of the master problem are required to obtain Benders cuts.  Of course, the optimal values obtained from the master problem are no longer valid lower bounds.  To obtain a provably optimal solution of the original problem, the master problem must be solved to optimality in the latter stages of the Benders procedure.  

Supoptimal solution of the subproblem is a more difficult proposition, because it can result in non-valid Benders cuts.  This possibility was investigated for classical Benders decomposition in \cite{ZakPhiRya00}, where it is assumed that a dual feasible solution is available for an LP subproblem that is not solved to optimality, as for example when using an interior-point method.   More relevant here is an application to LBBD in \cite{RaiBauHu14,RaiBauHu15}, where dramatic speedups were obtained for a vehicle routing problem by solving the subproblem and possibly the master problem with metaheuristics.  This sacrifices optimality but yields significantly better solutions, in much less time, than terminating an exact LBBD algorithm prematurely.  This study also found ways, based on specific problem structure, to improve the accuracy of previously-solved subproblems using information obtained from approximate solution of the current subproblem.  

Another possible strategy, not employed in \cite{RaiBauHu14,RaiBauHu15}, is to solve the inference dual of the subproblem directly by searching for a proof of optimality, and then terminating the search prematurely.  The resulting bound is not optimal but can serve as the basis of a valid Benders cut.  To guarantee convergence to an optimal solution, the subproblem dual must at some point be solve to optimality.  One general approach to solving the inference dual directly is given in \cite{BenHoo19}, where branching is interpreted as a solution method for the inference dual and is managed accordingly.  
\bigskip

\noindent{\em No Natural Decomposition}
\medskip

\noindent
Finally, a problem need not decompose naturally to benefit from LBBD.  This is demonstrated in \cite{CobHoo10,CobHoo13}, which solves a simple single-machine scheduling problem with time windows, but with many jobs and long time horizons.  To decompose the model, the time horizon is divided into segments.  The master problem decides in which segment(s) a job is processed, and the subproblem decouples into a scheduling problem for each segment.  Because a job can overlap two or more segments, the decomposition might be viewed as unnatural, and in fact the master problem and analytic Benders cuts are quite complex and tedious to formulate.  

However, modeling complexity does not necessarily imply computational complexity.  It was found that LBBD is much faster than stand-alone CP and MILP on minimum cost and makespan instances.  Nearly all instances were intractable for CP, and many for MILP, while only one was intractable for LBBD.  The LBBD advantage was more modest on minimum tardiness instances.  Interestingly, CP solved a few of the instances in practically zero time (presumably because the arrangement of time windows permitted effective propagation), but it timed out on the remaining instances.

\section{Survey of Applications}

As noted earlier, logic-based Benders cuts must be developed for each class of problems.  This may require ingenuity but affords an opportunity to exploit problem structure and design a superior solution algorithm.  Fortunately, there is a sizeable LBBD literature that describes how Benders cuts can be designed for particular problem classes.  Examination of previous work in an application domain similar to one's own may suggest effective cuts as well as subproblem relaxations.  To this end, we survey a variety of LBBD applications.
\bigskip

\noindent{\em Task Assignment and Scheduling}
\medskip

\noindent
The assignment and scheduling problem discussed above is further studied in \cite{CirCobHoo13,CirCobHoo16}, where updated computational testing found that LBBD remains orders of magnitude faster than the latest MILP technology, with the advantage over CP even greater.  Similar assignment and scheduling problems are solved in \cite{ChuXia05,Hoo05b,Hooker06,TraAraBec16}.  LBBD models having basically the same structure have been applied to steel production \cite{GolSch18,HarGro01}, concrete delivery \cite{KinTri14}, batch scheduling in chemical plants \cite{HarGro02,Mar06,MarGro04,Tim02}, resource scheduling with sequence-dependent setups \cite{TraBec12}, and computer processor scheduling \cite{BenBerGueMil05,BenLomManMilRug08,BenLomMilRug11,CamHlaDepJusTri04,EmeTheAleVor15,HlaCamDepJus08,LiuGuXuWuYe11,LiuYuaHeGuLiu08,LomMil06,LomMilRugBen10,RugGueBerPolMil06,SatRavKeu07}.
\bigskip

\noindent
{\em Vehicle Routing}
\medskip

\noindent
LBBD has been applied to a number of vehicle routing problems, most of which decompose into vehicle assignment and routing components.  The latter include capacitated vehicle routing \cite{RaiBauHu14,RaiBauHu15,RiaSeaWigLen13,SarWigCar13}, dispatching and routing of automated guided vehicles \cite{CorLanRou04,NisHirGro11}, dial-a-ride problems \cite{RieRai18}, 
and a senior door-to-door transportation problem (on which pure CP ``surprisingly'' performs better than LBBD) \cite{LiuAleBec18}.  In other solution approaches, the master problem selects markets to visit in the traveling purchaser problem \cite{BooTraBec16}, 
finds initial routes in a traffic diversion problem \cite{XiaEreWal04}, and assigns jobs to cranes in yard crane dispatching and scheduling problems \cite{NosBriPes18,Dij15}.  Additional LBBD applications include search and rescue \cite{RaaMolZsiPic16},
coordinating vessels for inter-terminal transport \cite{LiNegLod16,LiNegLod17}, and 
maritime traffic management \cite{AguAksLau18}.
\bigskip

\noindent
{\em Shop, Factory, and Employee Scheduling}
\medskip

\noindent
LBBD applications to shop and factory scheduling include aircraft repair shop scheduling \cite{BajBec13},
job shop scheduling with human resurce constraints \cite{GuyLemPinRiv14},
permutation flowshop scheduling with time lags \cite{HamLou13,HamLou15},
one-machine scheduling problems \cite{CobHoo13,Sad04,Sad08}, and flowshop planning and scheduling with deteriorating machines \cite{BajBec15}.  There is also an application to feature-based assembly planning \cite{KarKovVan17}.  Employee scheduling applications include 
shift selection and task sequencing \cite{BarCohGus10}, 
multi-activity shift scheduling \cite{SalWal12}, shift scheduling with fairness constraints \cite{DoiNis14}, railway crew rostering with fairness constraints \cite{NisSugInu14}, 
and a multi-activity tour scheduling problem that integrates shift scheduling with days-off planning \cite{ResGenRou18}. 
\bigskip

\noindent
{\em Other Scheduling and Logistics Problems}
\medskip

\noindent
LBBD has been applied to a variety of additional scheduling and logistics problems.  In the transportation logistics domain, they include food distribution \cite{SolSchGho14}, bicycle sharing \cite{KloPapHuRai15,KloRai17}, lock scheduling \cite{VerKinDecBer15}, and supply chain scheduling \cite{TerDogOzeBec12}.  Other applications are project scheduling \cite{KafGanSchSonStu17}, robust call center scheduling \cite{CobHecHooSch14}, task scheduling for satellites \cite{ZhuZhaQiuLi12},
course timetabling \cite{CamHebOsuPap12},
wind turbine maintenance scheduling \cite{FroGenMenPinRou17},
queuing design and control \cite{TerBecBro07,TerBecBro09},  service restoration planning for infrastructure networks \cite{GonLeeMitWal09}, and 
sports scheduling \cite{Che09,Ras08,RasTri07,TriYil07,TriYil11}.
\bigskip

\noindent
{\em Health-Related Applications}
\medskip

\noindent
LBBD applications in the rapidly growing healthcare field include operating room scheduling \cite{Luo15,RosAleUrb15,RosLuoAleUrb17,RosLuoAleUrb17a}, outpatient scheduling \cite{RiiManLam16}, and
home health care routing and scheduling \cite{CirHoo12,HecHoo16,HecHooKim19}.  The study reported in \cite{HecHooKim19} is a case in which branch and check substantially outperforms standard LBBD due to rapid solution of the subproblems relative to the master problem.
\bigskip

\noindent
{\em Facility Location}
\medskip

\noindent
Some location problems addressed by LBBD are plant location \cite{FazBec12}, inventory location \cite{WheGzaJew15}, 
stochastic warehouse location \cite{TarArmMig06}, location-allocation problems \cite{FazBec09},
and facility location and fleet management \cite{FazBerBec13}.
\bigskip

\noindent
{\em Network Design}
\medskip

\noindent
Network design applications include green wireless local area network design \cite{GenGarNenScuTav13,GenScuGarNenTav16},
transport network planning for postal services \cite{PetTri09},
broadcast domination network design \cite{SheSmi11}, and the
edge partition problem in optimal networks \cite{TasSmiAhmSch09}.  Yet another employment of LBBD is to solve the 
minimum dominating set problem \cite{GenLucCunSim14}, which is a key element of a variety of network design problems.
\bigskip

\clearpage
\noindent
{\em Other Applications}
\medskip

\noindent
LBBD has proved useful in a number of additional domains, both practical and algorithmic.  Practical applications include capacity planning \cite{GavMilOsuHol12}, 
 logic circuit verification \cite{HooYan95},
 template design \cite{TarArmMig06}, 
 strip packing \cite{CotDelIor14,MasRai15},
 orthogonal stock cutting \cite{DelIorMar17},
 robust scheduling \cite{CobHecHooSch14}, and robust optimization \cite{AssNorSanAnd17,AssNorSanAnd17a}.
 Interestingly, LBBD can also play a role in the solution of abstract problem classes, such as optimal control \cite{BorSad09}, quadratic programming \cite{BaiMitPan12,HuMitPan12,HuMitPanBenKun08}, 
chordal completion \cite{BerRag15}, 
linear complementarity \cite{HuMitPanBenKun08}, modular arithmetic \cite{KafGanSchSonStu17}, the operator count problem in automated planning algorithms \cite{DavPeaStuLip01}, and propositional satisfiability (SAT) \cite{BacDavTsiKat14}.  A hitting set method that has been successfully applied to the maximum satisfiability problem (MAXSAT) is a special case of LBBD \cite{DavBac13,DavBac11}.
\bigskip

\section{Concluding Remarks: Implementation}

One impediment to the use of LBBD may be the lack of an implementation in off-the-shelf software.  The Benders cuts must be designed by hand, and the communication between master problem and subproblem carried out by special-purpose code.  Yet solution of a large-scale problem is typically far from straightforward by any method, even using a powerful MILP or SAT solver.  An MILP model must often be carefully written or reformulated to make it tractable for a solver, and formulation of problems for a SAT solver is even more challenging.  Of course, many problems are beyond the capability of a stand-alone solver, regardless of how they are formulated.

Actually, LBBD has recently been automated in the MiniZinc modeling system \cite{DavGanStu17}.  The system chooses the decomposition and Benders cuts rather than relying on the user to do so.  This is a convenience but may result in a less effective realization of LBBD.  The ability of LBBD to benefit from user insight is a substantial advantage, since humans are much better at pattern recognition, and therefore at discerning problem structure, than machines.  There are also modeling systems that can facilitate the implementation of logic-based Benders, such as IBM's OPL Studio, the Mosel development environment, and the general-purpose solver SIMPL \cite{YunAroHoo10}. A survey of software tools for implementing LBBD and other hybrid methods, if somewhat dated, can be found in \cite{Yun11}.

As large-scale applications proliferate in our age of big data, and decomposition methods are increasingly called upon, it is likely that tools for their implementation will become increasingly powerful and make the application of methods like LBBD more routine.



\end{document}